\newtheorem{theorem}{Theorem}
\newtheorem{proposition}[theorem]{Propositon}
\newtheorem{conjecture}[theorem]{Conjecture}
\newtheorem{lemma}[theorem]{Lemma}
\theoremstyle{definition}
\theoremstyle{remark}
\numberwithin{equation}{section}
\newcommand{\R}{\mathbb{R}}
\newcommand{\F}{\mathbb{F}}
\begin{document}

\title{Linear Independence of Powers for Polynomials}


\author{Alexandru Cr\u aciun}
\address{}
\curraddr{}
\email{a.craciun@tum.de}
\thanks{}

\subjclass[2020]{Primary: 12E05}

\date{}

\dedicatory{}


\begin{abstract}
	We prove the following conjecture by \cite{Kileel19}: given $ k \ge 2 $
	polynomials in $ d \ge 1 $ variables with real coefficients $ p_1, \ldots,
	p_k \in \R[x_1, \ldots, x_d] $, no two of which are linearly dependent over
	$ \R $, it holds that for any integer $ r > \left\{k \binom{k-1}{2}, 2\right\} $, their $
	r $-th powers $ p_1^r, \ldots, p_k^r $ are linearly independent.
\end{abstract}

\maketitle


\section{Introduction}
\label{section:introduction}

In \cite{Kileel19} the following result was proven:

\begin{proposition}
	\label{prop:kileel}
	Given positive integers $ d,k,s $, there exists $ \tilde{r} = \tilde{r}(d,
	k, s) $ with the following property. Whenever $ p_1, \ldots, p_k \in \R[x_1,
	\ldots, x_d] $ are $ k $ homogeneous polynomials of the same degree
	$ s $ in $ d $ variables, no two of which are linearly dependent, then $
	p_1^r, \ldots, p_k^r $ are linearly independent if $ r > \tilde{r} $.
\end{proposition}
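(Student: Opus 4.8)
The plan is to reduce the statement to a single variable by restricting to a generic affine line, and then to contradict any putative relation using the generalized Mason--Stothers theorem (the Brownawell--Masser inequality) for vanishing sums in $\R[t]$.

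First I would argue by contradiction: suppose that for some $r$ the powers satisfy a nontrivial relation $\sum_{i=1}^{k} c_i p_i^r = 0$. As this is an identity of polynomials on $\R^d$, it restricts to every affine line $t \mapsto a + t b$. I would choose the line generically so that, for each index $i$ with $c_i \neq 0$, the restriction $q_i(t) := p_i(a + tb)$ is a nonzero univariate polynomial of degree exactly $s$, and so that the $q_i$ remain pairwise linearly independent. Both are open dense conditions: the coefficient of $t^s$ in $q_i$ is $p_i(b)$, nonzero for generic $b$ since $s \ge 1$; and for independent $p_i, p_j$ the degree-zero rational function $p_i/p_j$ on $\mathbb{P}^{d-1}$ is non-constant, hence stays non-constant on a generic line, forcing $q_i \not\parallel q_j$. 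This turns the claim into the univariate statement: pairwise independent $q_1, \ldots, q_k \in \R[t]$ of degree $s$ cannot satisfy $\sum c_i q_i^r = 0$ once $r$ is large.

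In the univariate setting I would normalize the relation. Discarding vanishing coefficients and passing to a \emph{minimal} vanishing subrelation yields $\sum_{i \in S} c_i q_i^r = 0$ with $|S| = m \ge 2$, all $c_i \neq 0$, and no proper nonempty subsum equal to zero. Writing $h := \gcd_{i \in S} q_i$ and $q_i = h\,\tilde q_i$, the relation becomes $\sum_{i \in S} c_i \tilde q_i^{\,r} = 0$ with $\gcd_{i \in S}(\tilde q_i) = 1$; the $\tilde q_i$ are still pairwise independent, so at most one of them is constant and hence at least one is non-constant. Setting $u_i := c_i \tilde q_i^{\,r}$, the tuple $(u_i)_{i \in S}$ sums to zero, has no vanishing proper subsum, is globally coprime, and is not all-constant, so the Brownawell--Masser inequality gives
\[
  \max_{i \in S} \deg u_i \;\le\; \binom{m-1}{2}\Bigl(\deg \operatorname{rad}\bigl(\textstyle\prod_{i \in S} u_i\bigr) - 1\Bigr).
\]
Here $\operatorname{rad}(\prod_i u_i)$ has the same zeros as $\prod_i \tilde q_i$, so its degree (the number of distinct roots) is at most $\sum_i \deg \tilde q_i \le ks$, independently of $r$; while the left-hand side is at least $r$, since $\deg u_i = r\,\deg \tilde q_i \ge r$ for a non-constant $\tilde q_i$. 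Hence $r \le \binom{k-1}{2}(ks - 1)$, and choosing $\tilde r(d,k,s) := \binom{k-1}{2}(ks - 1)$ rules out any such relation for $r > \tilde r$. Note this bound is independent of $d$, and for $k = 2$ it equals $0$, recovering the elementary case.

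I expect the only genuine content to sit in the univariate estimate: the Brownawell--Masser inequality is the nontrivial input, itself proved through Wronskian degree bounds, and it is exactly the appearance of $\binom{m-1}{2}$ there that drives the shape of the final bound. The reduction to one variable is routine but must be executed carefully, since the choice of line has to preserve non-constancy and pairwise independence simultaneously for all indices occurring in the relation; a secondary subtlety is the clearing of the common factor $h$, after which one must check that not all $\tilde q_i$ become constant.
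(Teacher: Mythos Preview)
Your argument is correct. Note, however, that the paper does not itself prove Proposition~\ref{prop:kileel}; it is quoted from \cite{Kileel19}, and the paper's own contribution is the stronger Theorem~\ref{thm:main}. Your strategy --- specialize generically to one variable, normalize to a coprime vanishing sum, then invoke the generalized Mason/Brownawell--Masser inequality --- is essentially the paper's proof of Theorem~\ref{thm:main}, with two cosmetic differences: you restrict to a generic affine line where the paper's Lemma~\ref{lem:projection} freezes all but one variable, and you pass to a minimal vanishing subsum where the paper inducts on $k$ to reach the ``span has dimension $m-1$'' hypothesis of its Proposition~\ref{prop:mason}.

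The one substantive difference is the final estimate. You bound the radical by $\sum_i \deg \tilde q_i \le ks$ and obtain $r \le \binom{k-1}{2}(ks-1)$, which suffices for Proposition~\ref{prop:kileel} but depends on $s$. The paper instead \emph{sums} the Mason inequality over all indices, so that $\sum_i \deg u_i = r\,\deg\bigl(\prod_i \tilde q_i\bigr)$ appears on the left while $\deg\operatorname{rad}\bigl(\prod_i u_i\bigr) \le \deg\bigl(\prod_i \tilde q_i\bigr)$ bounds the right; the unknown total degree cancels and one gets $r \le k\binom{k-1}{2}$, independent of $s$ and $d$. Adding that single step to your argument upgrades it to a proof of Theorem~\ref{thm:main}.
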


The proof is elementary, however it is not entirely obvious how one would go
about removing the dependence of $ \tilde{r} $ on the degree $ s $ of the
polynomials. If $ \tilde{r} $ depends only on the number of polynomials and
variables considered, \cite{Kileel19} show that the interpolation ability of
neural networks with monomial activation functions could be characterized explicitly
(we refer the read to their paper for more details). Thus, they were led to
conjecture the following.

\begin{conjecture}
	\label{conj:kileel}
	In the setting of Proposition~\ref{prop:kileel}, $ \tilde{r} $ may be taken
	to depend only on $ d $ and $ k $.
\end{conjecture}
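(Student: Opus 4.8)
The plan is to reduce to one variable and then run a Mason--Stothers type Wronskian estimate, arranged so that the degree $s$ cancels out entirely; this in fact yields a $\tilde r$ depending on $k$ alone, which is stronger than the conjecture asks. Suppose a nontrivial relation $\sum_{i=1}^k c_i p_i^r = 0$ holds. First I would restrict all $p_i$ to a generic parametrized affine line $x = a + tb$ with $a \neq 0$ (so that the homogeneous case of the conjecture is covered and no $p_i$ collapses to a scalar multiple of $t^s$), obtaining univariate $q_i(t) = p_i(a+tb)$ with $\sum c_i q_i^r = 0$. Pairwise non-proportionality is an open condition that a generic line detects, so for generic $(a,b)$ the $q_i$ remain pairwise linearly independent and non-constant; extending scalars to $\mathbb{C}$ (harmless for real polynomials), it therefore suffices to derive a contradiction in $\mathbb{C}[t]$ with a bound depending only on $k$.

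So suppose $q_1,\ldots,q_k \in \mathbb{C}[t]$ are pairwise non-proportional with $\sum c_i q_i^r = 0$ nontrivially. Passing to a relation of minimal support $\{1,\ldots,m\}$, I may assume every $c_i \neq 0$, that no proper subsum vanishes, and, after dividing through by $\gcd(q_1,\ldots,q_m)^r$, that $\gcd(q_1,\ldots,q_m)=1$; since two non-proportional polynomials never have proportional $r$-th powers, $m \geq 3$. Minimality makes any $m-1$ of the $q_i^r$ linearly independent, so $W := W(q_1^r,\ldots,q_{m-1}^r) \neq 0$. Differentiating the relation repeatedly shows the column vectors $(q_i^r,(q_i^r)',\ldots,(q_i^r)^{(m-2)})$ satisfy the same relation, whence the $m$ ``omit-one'' Wronskians $W_j$ are all proportional to $(c_1,\ldots,c_m)$ up to sign, hence pairwise proportional by nonzero constants.

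Two estimates on $\deg W$ then collide. From the derivative structure of a Wronskian, $\deg W \leq r\sum_{i=1}^{m-1}\deg q_i - \binom{m-1}{2}$. For a lower bound, fix a root $\alpha$ of $\prod_i q_i$; because $\gcd = 1$ some $q_{j}$ does not vanish at $\alpha$, and using $W \propto W_{j}$ with the local inequality $\operatorname{ord}_\alpha W(h_1,\ldots,h_{m-1}) \geq \sum_l \operatorname{ord}_\alpha h_l - \binom{m-1}{2}$ gives $\operatorname{ord}_\alpha W \geq r\sum_i \operatorname{ord}_\alpha q_i - \binom{m-1}{2}$. Summing over the $N_0$ distinct roots yields $\deg W \geq r\sigma - N_0\binom{m-1}{2}$ with $\sigma = \sum_{i=1}^m \deg q_i$. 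Taking the omitted index $m$ to have maximal degree and comparing the two bounds gives $r\deg q_m \leq (N_0-1)\binom{m-1}{2}$. The decisive point is that $N_0 \leq \sigma \leq m\max_i \deg q_i = m\deg q_m$, so the degrees cancel and $r < m\binom{m-1}{2} \leq k\binom{k-1}{2}$, the desired contradiction.

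I expect the main obstacle to be the bookkeeping in the non-coprime case: when the $q_i$ share roots, the naive ``$W$ is divisible by $(q_1\cdots q_m)^{r-1}$'' argument (valid when they are pairwise coprime) fails, and one must argue point-by-point with orders of vanishing as above. It is exactly this local form --- matching the term $r\sigma$ in the lower bound against $r\sum_{i<m}\deg q_i$ in the upper bound, with the loss controlled by the number of \emph{distinct} roots $N_0 \le m\max_i\deg q_i$ rather than by $\sigma$ --- that makes $s$ drop out and leaves a bound in $k$ alone. Secondary points to verify are the genericity of the line (including that no $q_i$ degenerates to a constant and that non-proportionality is preserved) and the small-support cases $m \le 2$, which are immediate and account for the harmless floor $\max\{\,\cdot\,,2\}$ in the exponent.
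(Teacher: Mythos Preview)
Your proposal is correct and follows essentially the same approach as the paper: reduce to the univariate coprime case over an algebraically closed field and then invoke the generalized Mason--Stothers inequality, after which the degree cancels and one is left with $r < k\binom{k-1}{2}$. The only cosmetic differences are that you restrict to a generic affine line (the paper instead specializes all but one coordinate via a separate lemma), that you spell out the Wronskian proof of Mason--Stothers inline (the paper cites it as a black box), and that your passage to a minimal-support relation plays exactly the role of the paper's induction on~$k$.
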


In this short paper, we show that the dependence of $ \tilde{r} $ on the degree
$ s $ of the polynomials \emph{and} on the number $ d $ of variables can be
removed. Moreover, it also turns out that the restriction to homogeneous
polynomials can be dropped as well. Before we state our result, we fix $ \F $ a field of
characteristic $ 0 $. With this, we prove the following result.

\begin{theorem}
	\label{thm:main}
	Let $ d\geq1 $ and $ k\geq2 $ be positive integers and $ p_1, \ldots, p_k
	\in \F[x_1, \ldots x_d] $ non-zero, pairwise linearly independent
	polynomials. Then the $ r $-th powers $ p_1^r, \ldots, p_k^r $ are linearly
	independent for any $ r > \max\left\{k \binom{k-1}{2},2\right\} $.
\end{theorem}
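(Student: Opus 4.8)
The plan is to reduce to one variable and then convert a hypothetical linear relation into a degree inequality via Wronskians, in the spirit of the Mason--Stothers (polynomial $abc$) theorem.

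\smallskip
\emph{Step 1: reduction to $d=1$.} I would first collapse the several variables by a Kronecker substitution $\varphi\colon\F[x_1,\dots,x_d]\to\F[t]$, $x_j\mapsto t^{N^{j-1}}$, with $N$ chosen larger than the total degree of all the $p_i$. This $\varphi$ is a ring homomorphism that is injective on the finite-dimensional space of polynomials of bounded degree, so it preserves non-vanishing, sends pairwise linearly independent polynomials to pairwise linearly independent univariate ones, and carries any relation $\sum_i c_ip_i^r=0$ to $\sum_i c_i\varphi(p_i)^r=0$. Hence it suffices to prove the theorem for $d=1$. (Passing instead to $\F(x_2,\dots,x_d)[x_1]$ would not work, since polynomials independent over $\F$ may become dependent over the larger field of constants; the Kronecker map keeps the constant field equal to $\F$.)

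\smallskip
\emph{Step 2: a minimal relation.} Working now in $\F[t]$, suppose for contradiction that $p_1^r,\dots,p_k^r$ are dependent and pass to a minimal dependent subset; after reindexing this is a relation $\sum_{i=1}^m c_ip_i^r=0$ with all $c_i\neq0$, every proper subset independent, and $2\le m\le k$. If $m=2$ then $p_1^r$ is a scalar multiple of $p_2^r$, so $p_1$ and $p_2$ share all zeros with equal multiplicities and $p_1=\lambda p_2$, contradicting independence; this case in fact holds for every $r$. So assume $m\ge3$, in which case $W_0:=W(p_1^r,\dots,p_{m-1}^r)\neq0$ because $p_1^r,\dots,p_{m-1}^r$ are independent and $\operatorname{char}\F=0$.

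\smallskip
\emph{Step 3: degrees.} Two estimates on $W_0$ drive the proof. From expanding the determinant, $\deg W_0\le r\sum_{i=1}^{m-1}\deg p_i-\binom{m-1}{2}$. For the lower bound I use that $\frac{d^a}{dt^a}p_i^r$ is divisible by $p_i^{r-a}$, hence by $p_i^{r-(m-2)}$ for every $0\le a\le m-2$, so $p_i^{\,r-(m-2)}$ divides the $i$-th column of $W_0$; moreover, since any $m-1$ of the powers span the same space, replacing columns via the relation shows that $p_m^{\,r-(m-2)}$ divides $W_0$ as well. When the $p_i$ are pairwise coprime these divisibilities multiply and a direct comparison of the two estimates already forces $r$ to be bounded. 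To obtain the stated bound uniformly I instead feed the relation into the generalized Mason--Stothers / Brownawell--Masser inequality $\max_i\deg(c_ip_i^r)\le\binom{m-1}{2}\bigl(\deg\operatorname{rad}(\textstyle\prod_i p_i^r)-1\bigr)$; using $\operatorname{rad}(\prod_i p_i^r)=\operatorname{rad}(\prod_i p_i)$ and $\deg\operatorname{rad}(\prod_i p_i)\le\sum_i\deg p_i\le m\max_i\deg p_i$ (and noting $\max_i\deg p_i\ge1$, as $m\ge2$ pairwise independent polynomials cannot all be constant) gives
\[
r\max_i\deg p_i\le\binom{m-1}{2}\bigl(m\max_i\deg p_i-1\bigr)<m\binom{m-1}{2}\max_i\deg p_i,
\]
so $r<m\binom{m-1}{2}\le k\binom{k-1}{2}$, contradicting $r>k\binom{k-1}{2}$.

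\smallskip
\emph{Main obstacle.} The delicate point is precisely the common-zero bookkeeping: pairwise linear independence is much weaker than pairwise coprimality, so the clean product of the divisibilities $p_i^{\,r-(m-2)}\mid W_0$ is unavailable, and it is exactly the passage to the radical that both absorbs shared factors and produces the constant $\binom{m-1}{2}$. I expect the bulk of the work to be the Wronskian proof of this radical estimate---controlling $v_\pi(W_0)$ at primes $\pi$ dividing several of the $p_i$ and checking that all $(m-1)$-element sub-Wronskians agree up to a nonzero scalar---while Steps 1 and 2 and the final arithmetic are routine.
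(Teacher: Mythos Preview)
Your overall strategy matches the paper's: reduce to one variable, pass to a shortest linear relation among the $r$-th powers, and feed it into the generalized Mason/Brownawell--Masser inequality to force $r<m\binom{m-1}{2}\le k\binom{k-1}{2}$. (The paper phrases the passage to a short relation as an induction on $k$ rather than a minimal-subset argument, but these are equivalent.) Your Kronecker substitution for the reduction to $d=1$ is genuinely different from---and cleaner than---the paper's route, which instead locates a generic point $(\alpha_2,\dots,\alpha_d)$ outside a union of proper subvarieties to keep the specialized univariate polynomials pairwise independent; your map sidesteps all of that dimension counting.

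There is, however, a real gap in Step~3. The generalized Mason inequality you invoke requires that the summands have no common zero (equivalently $\gcd=1$); your assertion that ``the passage to the radical absorbs shared factors'' is not correct. A concrete counterexample with $m=3$: take $a=x^{10}(x+1)$, $b=x^{10}(x+2)$, $c=-x^{10}(2x+3)$; then $a+b+c=0$, any two of $a,b,c$ are independent, yet $\max\deg=11$ while $\binom{2}{2}\bigl(\deg\operatorname{rad}(abc)-1\bigr)=3$. So the radical bound simply fails without the coprimality hypothesis, and no amount of prime-by-prime Wronskian bookkeeping will rescue it in that generality. The paper handles this by first dividing the polynomials in the short relation by their greatest common divisor: if $g=\gcd(p_1,\dots,p_m)$ and $q_i=p_i/g$, then the $q_i$ remain nonzero and pairwise linearly independent, the relation becomes $\sum_i c_i q_i^r=0$ with the $c_iq_i^r$ now genuinely free of common zeros, and Mason applies to give $r<m\binom{m-1}{2}$ via the same arithmetic with $q_i$ in place of $p_i$. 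With this one-line fix your argument is complete and essentially coincides with the paper's.
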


Before we present our proof, we mention that \cite{Sam19} already investigated
this question for powers of elements of integral algebras over algebraically
closed fields in arbitrary characteristic. They found out that given any $ k $
non-zero elements, there exists some integer $ 1 \leq r \leq k! $ such that the
$ r $-th powers are linearly independent over the base field. The earliest result
we have been able to find related to this problem, \cite{Bisht17}*{Theorem 6.2}, says
that for $ k $ polynomials satisfying the assumptions of Theorem~\ref{thm:main},
then their $ r $-th powers are linearly dependent for at most $ 
\binom{k-1}{2} $ values of $ r $, however no general upper bound is given. For
applications of this result to the setting of polynomial neural networks, we
refer the reader to \cite{Kileel19} which first stated the conjecture and to
\cite{Kubjas24} where a number of (counter-)examples are given.

\section{Proof of Theorem~\ref{thm:main}}
\label{section:proof}

We start by showing that it suffices to consider the univariate case with
relatively prime polynomials over an algebraically closed field. The last two
are easy to see: 1) we can always factor out the greatest common divisor $ d $
from the polynomials $ p_1, \ldots, p_k $ since they are linearly (in)dependent
if and only if their relatively prime factors $ p_1/d, \ldots, p_k/d $ are
linearly (in)dependent as well; 2) we can assume that $ \F $ is algebraically
closed since a counterexample to Theorem~\ref{thm:main} when $ \F $ is not
algebraically closed will certainly carry over when we consider the polynomials
with coefficients in the algebraic closure of $ \F $. The reduction to the
univariate case is harder, and we show how to do it in Section~\ref{sse:reduction_to_d1}. 

\subsection{Proof of Theorem~\ref{thm:main} for $ d=1 $}
\label{sse:proof_for_d1}
We will be using the generalized Mason's inequality.
\begin{proposition}[Generalized Mason's Inequality \cite{Sheppard14}*{Theorem
	1.3.1}]
	\label{prop:mason}
	Suppose $ p_1, \ldots, p_k $ are polynomials in $ \F[x] $ such that
	\begin{enumerate}
		\item
			$ p_1 + \ldots + p_k = 0 $,

		\item
			$ p_1, \ldots, p_k $ span a vector space of dimension $ k-1 $
			over $ \F $,

		\item 
			$ p_1, \ldots, p_k $ have no common zero in $ \F $.
	\end{enumerate}

	Then
	\[
		\max\{ \deg p_i \} \leq  \binom{k-1}{2}(n_0 (p_1\ldots p_k)
		- 1).
	\]
\end{proposition}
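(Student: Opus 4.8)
The plan is to prove the inequality by a Wronskian argument, combining a cheap global degree bound with a more delicate local order‑of‑vanishing estimate at each zero of the product. First I would extract from hypotheses (1) and (2) the fact that will make everything symmetric: since $p_1+\cdots+p_k=0$ is a linear relation and the space of relations is one–dimensional (as $\dim\langle p_1,\ldots,p_k\rangle=k-1$), that space is spanned by $(1,\ldots,1)$; hence no coordinate of any relation vanishes, so \emph{every} $(k-1)$–element subset of $\{p_1,\ldots,p_k\}$ is linearly independent. I fix the Wronskian $W:=W(p_1,\ldots,p_{k-1})$, the determinant of $(p_i^{(j)})_{1\le i\le k-1,\,0\le j\le k-2}$; because $\F$ has characteristic $0$ and these polynomials are linearly independent, $W\neq 0$. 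Using multilinearity and the alternating property of the Wronskian together with $p_k=-\sum_{i<k}p_i$, I would check that replacing any single entry by the omitted polynomial alters $W$ only by a sign. Thus the Wronskian of any $k-1$ of the $p_i$ equals $\pm W$; in particular $\deg W$ and every local order $\operatorname{ord}_\alpha W$ are independent of which polynomial is omitted.

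The global half is immediate: for any omitted index $i_0$, expanding the determinant built from the remaining $k-1$ polynomials writes $W$ (up to sign) as a sum of terms $\prod_i p_i^{(\sigma(i))}$, each of degree at most $\sum_{i\ne i_0}\deg p_i-\sum_{j=0}^{k-2}j=\sum_{i\ne i_0}\deg p_i-\binom{k-1}{2}$, so $\deg W\le\sum_{i\ne i_0}\deg p_i-\binom{k-1}{2}$ for every $i_0$.

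The local estimate is the crux. For a root $\alpha$ of $p_1\cdots p_k$ put $a_i=\operatorname{ord}_\alpha p_i$; I would establish $\operatorname{ord}_\alpha W\ge\sum_{i=1}^{k}a_i-\binom{k-1}{2}$. By hypothesis (3) some $p_{i_0}$ does not vanish at $\alpha$, so $a_{i_0}=0$, and since $\operatorname{ord}_\alpha W$ may be computed from the $k-1$ polynomials omitting $p_{i_0}$, it suffices to bound the order of that Wronskian by $\sum_{i\ne i_0}a_i-\binom{k-1}{2}$. For linearly independent power series one may pass, through constant invertible column operations (which only multiply the Wronskian by a nonzero scalar), to a basis in echelon (staircase) form with distinct orders of vanishing $b_1<\cdots<b_{k-1}$; a falling‑factorial Vandermonde computation then shows the Wronskian has order exactly $\sum_j b_j-\binom{k-1}{2}$, the leading coefficient being nonzero precisely because $\F$ has characteristic $0$. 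A counting argument comparing, for each threshold $t$, the dimension of $\{f:\operatorname{ord}_\alpha f\ge t\}$ with the number of the $p_i$ having $a_i\ge t$ gives $\sum_j b_j\ge\sum_{i\ne i_0}a_i$, whence the claimed local bound.

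To finish, I would sum the local bound over the $N:=n_0(p_1\cdots p_k)$ distinct roots of the product. Since $\sum_\alpha\operatorname{ord}_\alpha W\le\deg W$ and $\sum_\alpha\sum_i a_i=\sum_i\deg p_i$, this yields $\sum_{i=1}^{k}\deg p_i-N\binom{k-1}{2}\le\deg W$. Combining with the global bound obtained by omitting a polynomial $p_m$ of maximal degree, namely $\deg W\le\sum_{i=1}^{k}\deg p_i-\deg p_m-\binom{k-1}{2}$, and cancelling $\sum_i\deg p_i$ leaves $\deg p_m\le(N-1)\binom{k-1}{2}$, which is exactly the assertion. I expect the main obstacle to be the local estimate: proving that the order of the Wronskian at $\alpha$ equals $\sum_j b_j-\binom{k-1}{2}$ and that $\sum_j b_j\ge\sum_i a_i$, and arranging—via the no‑common‑zero hypothesis and the sign‑invariance of $W$—to gather all $k$ multiplicities $a_i$ into a single root's contribution.
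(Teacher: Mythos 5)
Your proof is correct, but there is a mismatch of a different kind: the paper itself contains \emph{no} proof of Proposition~\ref{prop:mason} --- it is imported as a black box from Sheppard's thesis and merely applied. So what you have written is a proof of the paper's external citation, and it is essentially the standard Wronskian proof of the generalized Mason--Stothers theorem (very likely the same argument as in the cited source). Every step checks out: one-dimensionality of the relation space does force every $(k-1)$-element subset of $\{p_1,\ldots,p_k\}$ to be linearly independent; multilinearity and the alternating property do show that all the maximal Wronskians agree up to sign with $W \neq 0$; the global degree bound, the local order bound, and the final summation with cancellation of $\sum_i \deg p_i$ give exactly the stated inequality. Two remarks. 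First, your local estimate is heavier than necessary: the echelon basis, the falling-factorial Vandermonde, and the counting inequality $\sum_j b_j \ge \sum_{i} a_i$ can all be skipped, because the same Leibniz expansion you use for the global bound gives the local bound directly --- each term $\prod_{i\neq i_0} p_i^{(\sigma(i))}$ has $\operatorname{ord}_\alpha \ge \sum_{i\neq i_0}\bigl(a_i-\sigma(i)\bigr) = \sum_{i\neq i_0} a_i - \binom{k-1}{2}$, since $\operatorname{ord}_\alpha p_i^{(j)} \ge a_i - j$; the exactness your computation provides is never used downstream. Second, to make the bookkeeping identities $\sum_\alpha \sum_i a_i(\alpha) = \sum_i \deg p_i$ and the choice at every root of an index $i_0$ with $a_{i_0}=0$ literally valid, you must take the roots in the algebraic closure of $\F$, reading hypothesis (3) as $\gcd(p_1,\ldots,p_k)=1$ and $n_0$ as the degree of the radical; this is harmless and matches how the proposition is actually used in the paper, which first reduces to algebraically closed $\F$ and relatively prime $p_i$ before invoking it.
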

Here $ n_0 (p_1\ldots p_k) $ denotes the radical of $ p_1\ldots p_k $, i.e\ the
number of distinct roots of the polynomial. Now we can prove our main 
theorem.
\begin{theorem}
	Let $ k\geq2 $ be an integer and $ p_1, \ldots, p_k \in \F[x] $ be non-zero,
	pairwise linearly independent, relatively prime polynomials. For any $ r >
	\max\left\{ k \binom{k-1}{2}, 2\right\} $, $ p_1^r, \ldots, p_k^r $ are linearly
	independent.
\end{theorem}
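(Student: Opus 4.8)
The plan is to derive linear independence by contradiction, assuming a minimal nontrivial linear dependence among the powers and then applying the generalized Mason's inequality to obtain an upper bound on the degrees that conflicts with the lower bound forced by taking $r$-th powers. Suppose $p_1^r, \ldots, p_k^r$ are linearly dependent over $\F$. After discarding terms with zero coefficients, I would pass to a \emph{minimal} dependent subcollection: a relation $\sum_{i=1}^{m} c_i p_{j_i}^r = 0$ with all $c_i \neq 0$ and $m$ as small as possible, with $2 \le m \le k$. Minimality guarantees that any proper subset of $\{c_i p_{j_i}^r\}$ is linearly independent; in particular the polynomials $q_i := c_i p_{j_i}^r$ satisfy hypothesis (2) of Proposition~\ref{prop:mason}, namely they span a space of dimension exactly $m-1$.

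Next I would arrange the remaining hypotheses of Mason's inequality. Hypothesis (1), $q_1 + \cdots + q_m = 0$, holds by construction. Hypothesis (3), that the $q_i$ have no common zero, is the place where relative primeness of the original $p_i$ must be used: since the $p_{j_i}$ are relatively prime (even pairwise, by assumption), their $r$-th powers share no common root, and scaling by nonzero constants $c_i$ does not change this. With all three hypotheses verified, Mason's inequality yields
\[
	\max_i \deg(q_i) \le \binom{m-1}{2}\bigl(n_0(q_1 \cdots q_m) - 1\bigr).
\]
The key is to bound the right-hand side. Since $q_1 \cdots q_m$ is, up to a constant, $\prod_i p_{j_i}^r$, its distinct roots are exactly the distinct roots of $\prod_i p_{j_i}$, so $n_0(q_1 \cdots q_m) = n_0(\prod_i p_{j_i}) \le \sum_i \deg p_{j_i}$. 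Meanwhile the left-hand side is $r \cdot \max_i \deg p_{j_i}$.

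Combining these, I expect an inequality of the shape
\[
	r \cdot \max_i \deg p_{j_i} \le \binom{m-1}{2}\Bigl(\sum_i \deg p_{j_i} - 1\Bigr) \le \binom{m-1}{2}\bigl(m \max_i \deg p_{j_i} - 1\bigr),
\]
which, after noting that $\max_i \deg p_{j_i} \ge 1$ (the polynomials are nonconstant, or at least one must be, since pairwise-independent nonzero polynomials cannot all be constant once $m \ge 2$), forces $r \le m\binom{m-1}{2} \le k\binom{k-1}{2}$, contradicting the hypothesis $r > k\binom{k-1}{2}$. The main obstacle I anticipate is the degenerate low-degree case: if some or all $p_{j_i}$ are constants the degree bound $\max_i \deg p_{j_i} \ge 1$ can fail, and the $m=2$ case (where $\binom{m-1}{2}=0$) must be handled separately, which is presumably why the theorem carries the extra clause $r > 2$. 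I would dispatch $m=2$ directly: a relation $c_1 p_{j_1}^r + c_2 p_{j_2}^r = 0$ forces $p_{j_1}^r$ and $p_{j_2}^r$ to be proportional, hence (taking $r$-th roots in the UFD $\F[x]$, using $\mathrm{char}\,\F = 0$) $p_{j_1}$ and $p_{j_2}$ proportional, contradicting pairwise linear independence.
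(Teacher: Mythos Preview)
Your overall strategy matches the paper's: assume a dependence, isolate a subcollection to which Mason's inequality applies, and extract a bound on $r$ contradicting the hypothesis. The paper organizes this by induction on $k$, while you use a minimal dependent subcollection; these are equivalent devices. Your degree estimate via $\max_i \deg p_{j_i}$ is a legitimate variant of the paper's summation over all $i$, and your treatment of $m=2$ is fine.

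There is, however, a genuine gap in your verification of hypothesis~(3) of Proposition~\ref{prop:mason}. You assert that ``the $p_{j_i}$ are relatively prime (even pairwise, by assumption)'', but the theorem's hypotheses are that the $p_i$ are \emph{pairwise linearly independent} and that the \emph{full} family $p_1,\ldots,p_k$ has $\gcd$ equal to $1$. Neither statement implies that a proper subcollection $\{p_{j_1},\ldots,p_{j_m}\}$ with $m<k$ is relatively prime, let alone pairwise so. For example, $p_1=x$, $p_2=x(x+1)$, $p_3=x(x+2)$, $p_4=x+3$ are pairwise linearly independent with $\gcd(p_1,p_2,p_3,p_4)=1$, yet $p_1,p_2,p_3$ share the factor $x$. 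If your minimal relation has $m<k$, the $q_i$ may have a common zero and Mason's inequality is not available as stated.

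The repair is precisely what the paper does in the second case of its inductive step: divide the subcollection through by its greatest common divisor $g$ before invoking Mason. The quotients $p_{j_i}/g$ remain nonzero and pairwise linearly independent, are now relatively prime, and their $r$-th powers satisfy the same linear relation. Your degree argument then runs with $\deg(p_{j_i}/g)$ in place of $\deg p_{j_i}$ (at least one quotient is still nonconstant, since two nonzero constants are always proportional), and the bound $r< m\binom{m-1}{2}\le k\binom{k-1}{2}$ follows.
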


\begin{proof}
	We prove the theorem by induction on $ k $. For $ k = 2 $, if one polynomial is a
	constant, the assertion follows directly. If both are non-constant, then
	assuming their $ r $-th powers are linearly dependent, we get that $ p_2^r =
	\alpha p_1^r$ for some $ \alpha \neq 0 \in \F $. Hence, they are not
	relatively prime; contradiction.

	Assume now that $ k > 2 $ and that the theorem holds for any $ 2 \leq k' < k $.
	Suppose that $ p_1^r, \ldots, p_k^r $ are linearly dependent for some $ r $.
	This means that there exist $ \beta_1, \ldots, \beta_k \in \F $ not all zero
	such that 
	\[ 
		\beta_1 p_1^r + \ldots + \beta_k p_k^r = 0. 
	\]
	We consider two cases:
	\begin{enumerate}
		\item
			$ q_1, \ldots, q_k $ span a vector space of dimension $ k-1 $, where 
			$ q_i = \beta_ip_i^r $. We can use
			Proposition~\ref{prop:mason} to bound the degree of the polynomials
			(the $ q_i $'s have no common zeroes since we assumed the $ p_i $'s
			relatively prime). Thus, we get for any $ i $
			\begin{equation*}
				\label{eq:bound}
				\deg q_i \leq  \binom{k-1}{2}  (n_0(p_1^r\ldots
				p_k^r) - 1).
			\end{equation*}
			We also have that $ n_0(p_1^r\ldots p_k^r) = n_0(p_1\ldots p_k) \leq
			\deg(p_1\ldots p_k)$. Moreover, $ \deg q_1 + \ldots + \deg q_k = r\deg
			(p_1\ldots p_k)$. Hence, summing the inequalities for all $ i $'s, we
			get
			\[ 
				r\deg (p_1\ldots p_k) \leq k  \binom{k-1}{2}  (\deg
				(p_1 \ldots p_k) - 1).
			\]
			Thus, we see that $ r <  k  \binom{k-1}{2} $.

		\item
			If some proper subset of $ p_1^r, \ldots, p_k^r $ is linearly
			dependent, we will again show that $ r < k  \binom{k-1}{2}
			$ using the induction hypothesis. Without loss of generality,
			assume that $ p_1^r, \ldots, p_n^r $ with $ 2 \leq n < k $ are
			linearly dependent. We can't directly apply the inductive hypothesis
			since $ p_1, \ldots, p_n $ may have a common factor. If they don't,
			use the induction hypothesis to show the bound on $ r $. If they do,
			denote it by $ q $. Define $ q_i = p_i/q $. The $ q_i $'s are
			relatively prime, non-zero, and pairwise independent (dividing by a
			common factor does not change linear (in)dependence). Apply the
			inductive hypothesis on the $ q_i $'s to get the bound on $ r $.
	\end{enumerate}
\end{proof}
\subsection{Reduction to $ d=1 $ case}
\label{sse:reduction_to_d1}
The idea is to find a suitable variable $ x_i $ and constants $ \alpha_1,
\ldots, \alpha_{i-1}, \alpha_{i+1}, \ldots, \alpha_d $ such that evaluating the polynomials
$ p_1, \ldots, p_k $ at this tuple leads to
non-zero, pairwise linearly independent \emph{univariate} polynomials $
\bar{p}_1, \ldots, \bar{p}_{k'} $. Then, a counterexample to
Theorem~\ref{thm:main} with $ d > 1 $, also provides a counterexample when $ d =
1 $, by substitution. We start with a lemma showing that such constants can be found
if \emph{all} polynomials $ p_1, \ldots, p_k $ share a common variable. 

\begin{lemma}
	\label{lem:projection}
	Let $ d,k>0 $ be positive integers. Given any $ p_1, \ldots, p_k
	\in \F[x_1, \ldots, x_d]$ pairwise linearly independent with $ \deg_{x_1}
	p_i > 0 $ for all $ i \in \{1, \ldots, k\} $,
	there exists a point $ (\alpha_2, \ldots, \alpha_d) \in \F^{d-1} $ such that
	$ \bar{p}_1(x_1) = p_1(x_1, \alpha_2, \ldots, \alpha_d), \ldots,
	\bar{p}_k(x_1) = p_k(x_1, \alpha_2, \ldots, \alpha_d) $ are also pairwise linearly independent.
\end{lemma}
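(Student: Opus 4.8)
The plan is to reduce the statement to the non-vanishing of a single polynomial in the variables $x_2, \ldots, x_d$. For each pair $i \neq j$ I want to produce a nonzero polynomial $D_{ij} \in \F[x_2, \ldots, x_d]$ such that $D_{ij}(\alpha_2, \ldots, \alpha_d) \neq 0$ forces $\bar{p}_i$ and $\bar{p}_j$ to be nonzero and linearly independent. Granting this, I would set $D = \prod_{i < j} D_{ij}$, which is again a nonzero polynomial, and use that $\F$, having characteristic $0$, is infinite, so that $D$ admits a point $(\alpha_2, \ldots, \alpha_d) \in \F^{d-1}$ at which it does not vanish. At such a point every factor $D_{ij}$ is nonzero, so all $\binom{k}{2}$ pairs of restrictions stay independent simultaneously, which is exactly the conclusion.

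To construct $D_{ij}$ I would expand each polynomial in powers of $x_1$, writing $p_i = \sum_m c_{i,m}(x_2, \ldots, x_d)\, x_1^m$, so that $\bar{p}_i = \sum_m c_{i,m}(\alpha)\, x_1^m$ after substitution. Then $\bar{p}_i$ and $\bar{p}_j$ are linearly dependent exactly when the coefficient vectors $(c_{i,m}(\alpha))_m$ and $(c_{j,m}(\alpha))_m$ are proportional, i.e.\ when every $2 \times 2$ minor $c_{i,m}(\alpha)\, c_{j,m'}(\alpha) - c_{i,m'}(\alpha)\, c_{j,m}(\alpha)$ vanishes. Conversely, the non-vanishing of even one such minor at $\alpha$ certifies that both vectors are nonzero and non-proportional, hence that $\bar{p}_i, \bar{p}_j$ are nonzero and independent. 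So it suffices to pick, for each pair, a single minor $D_{ij}$ that is not the zero polynomial.

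The main obstacle is precisely the existence, for every pair, of a minor that is not identically zero — equivalently, that $p_i$ and $p_j$ are not proportional over the rational function field $K = \F(x_2, \ldots, x_d)$. Here one must be careful, because mere $\F$-linear independence does not guarantee this: for instance $x_1 x_2$ and $x_1$ are independent over $\F$ yet proportional over $K$, and their $x_1$-restrictions stay dependent at every point. I would therefore lean on the reductions made at the start of Section~\ref{section:proof}, working with relatively prime polynomials of positive $x_1$-degree, and argue by a Gauss-lemma/content computation that a hypothetical proportionality $p_i = (g/h)\, p_j$ over $K$, with $g, h \in \F[x_2, \ldots, x_d]$ coprime, would force $g/h$ to be a scalar and thus contradict $\F$-independence. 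Making this reduction precise — isolating exactly the non-proportionality over $K$ that the minor argument consumes, rather than the weaker independence over $\F$ — is the step I expect to demand the most care.
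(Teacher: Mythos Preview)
Your $2\times 2$-minor approach is cleaner than the paper's case-by-case analysis but aims at the same target: the locus of $(\alpha_2,\ldots,\alpha_d)$ at which some pair $\bar p_i,\bar p_j$ degenerates is a proper subvariety of $\F^{d-1}$. You have also put your finger on the genuine difficulty, which the paper's own proof glosses over: what is really needed is that each pair $p_i,p_j$ be linearly independent over $K=\F(x_2,\ldots,x_d)$, not merely over $\F$, and this is strictly stronger. (In the paper's cases 2(b) and 3(b) the scalar $\beta$ depends on the point $\alpha$, so the union over all $\beta$ of the varieties $Z(p_{j,l}-\beta p_{i,l})$ need not be proper; this is the $K$-independence obstruction in disguise.)

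The gap is in your proposed fix. The reduction at the start of Section~\ref{section:proof} yields only $\gcd(p_1,\ldots,p_k)=1$, not pairwise coprimality, and a Gauss/content computation from $h\,p_i=g\,p_j$ cannot force $g/h\in\F$ under that hypothesis alone: it only shows that the primitive parts of $p_i$ and $p_j$ (as polynomials in $x_1$ over $\F[x_2,\ldots,x_d]$) are $\F$-proportional, which says nothing about their contents. Concretely, take $d=2$, $k=3$ and
\[
p_1=x_1x_2,\qquad p_2=x_1,\qquad p_3=x_1+1.
\]
These are pairwise $\F$-independent, all have $\deg_{x_1}>0$, and $\gcd(p_1,p_2,p_3)=1$; yet $p_1=x_2\,p_2$ over $K$, so every minor built from the pair $(p_1,p_2)$ vanishes identically, and indeed $\bar p_1=\alpha_2 x_1$ and $\bar p_2=x_1$ are dependent for \emph{every} $\alpha_2\in\F$. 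Thus the lemma as stated is actually false, and no argument confined to its hypotheses---even augmented by the global $\gcd=1$ reduction---can succeed. What your minor argument does establish, cleanly, is the lemma under the stronger hypothesis of pairwise $K$-independence; closing the gap requires either strengthening the hypothesis to that, or reworking how the lemma is invoked in Section~\ref{sse:reduction_to_d1}.
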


\begin{proof}
	We look at the set of points $ (\alpha_2, \ldots, \alpha_d) \in \F^{d-1} $ such
	that some pair $ \bar{p}_i, \bar{p}_j $ is linearly dependent, and we call it $ V
	$. We show that $ V $ is a variety of dimension $ \dim V < k-1 $. Since we
	assumed $ \F $ to be of characteristic $ 0 $, it is infinite, hence
	it cannot happen that $ V = \F^{d-1} $. Thus, we can always find our desired
	point.

	Take any two polynomials $ p_i, p_j $ with $ i\neq j $. We can then write them as
	\begin{align*}
		p_i(x_1, \ldots, x_d) &= p_{i,m}(x_2, \ldots, x_d)x_1^m + \ldots +
		p_{i,0}(x_2, \ldots, x_d),\\
		p_j(x_1, \ldots, x_d) &= p_{j,n}(x_2, \ldots, x_d)x_1^n + \ldots +
		p_{j,0}(x_2, \ldots, x_d),
	\end{align*}
	and we know that $ p_{i,m} $ and $ p_{j,n} $ are not zero. Let $ (\alpha_2,
	\ldots, \alpha_k) \in \F^{k-1}$ and consider three cases:	
	\begin{enumerate}
		\item
			$ \bar{p}_i = p_i $ and $ \bar{p}_j = p_j $, whence they are linearly
		independent by assumption.
		 \item
			 $ \bar{p}_i = p_i $ but $ \bar{p}_j \neq p_j $. Then $ p_{i,l} $ are
			 constants for all $ l \in \{1, \ldots, m\} $. Assuming $ \bar{p}_i
			 $ and $ \bar{p}_j $ are linearly dependent we have that either:
			 \begin{enumerate}
			 	\item
					 $ \bar{p}_j = 0 $ in which case $ (\alpha_2, \ldots, \alpha_d) $ must
					 be a solution to the linear system of equations $ p_{j, l} = 0 $ for $ l
					 \in \{1, \ldots, n\} $, i.e.\ $ (\alpha_2, \ldots, \alpha_d) \in
					 Z(p_{j,1}, \ldots, p_{j, n}) $. The ideal $ (p_{j,1}, \ldots, p_{j,
					 n}) $ cannot be empty since at least one $ p_{j,l} $ is
					 non-constant (we assumed $ \bar{p}_j \neq p_j $).

				\item
					$ \bar{p}_j \neq 0 $ in which case $ \bar{p}_j = \beta
					\bar{p}_i $ for some non-zero $ \beta \in \F $. Then $
					(\alpha_2, \ldots, \alpha_d) \in Z(p_{j, 1} - \beta p_{i,
					1}, \ldots, p_{j, n} - \beta p_{i, n})$, which is non-empty.
			 \end{enumerate} 

		\item
			$ \bar{p}_i \neq p_i $ and $ \bar{p}_j \neq p_j	$. We have another
			two cases:
			\begin{enumerate}
				\item
					$ \bar{p}_i = 0 $ or $ \bar{p}_j = 0 $. In this case $
					(\alpha_2, \ldots, \alpha_d) \in Z(p_{i,1}, \ldots, p_{i,
					m}) \cup Z(p_{j,1}, \ldots, p_{j, n})$.
				\item 
					$ \bar{p}_j, \bar{p}_i \neq 0$. If they are linearly
					dependent, we have $ \bar{p}_j = \beta \bar{p}_i $ for some
					non-zero $ \beta \in \F $. This means that $ (\alpha_2,
					\ldots, \alpha_d) \in Z(p_{j,1} -\beta p_{i,1}, \ldots,
					p_{j,\max(n,m)} -\beta p_{i,\max(n,m)}) $.
			\end{enumerate}
	\end{enumerate}

	In either case, we see that if $ \bar{p}_i $ and $ \bar{p}_j $ are linearly
	dependent, then $ (\alpha_2, \ldots, \alpha_d) $ must lie in a variety of
	dimension strictly smaller than $ d-1 $, which we call $ V_{i,j} $. 

	Proceeding similarly for every pair of polynomials, we see that some of the
	resulting polynomials will be pairwise linearly dependent only if $
	(\alpha_2, \ldots, \alpha_d) \in \bigcup_{i < j} V_{i,j}  = V$. $ V $ is the
	finite union of varieties whose dimension is smaller than $ d-1 $, hence it
	is also a variety of dimensions strictly smaller than $ d-1 $. 
\end{proof}

Using this lemma, we can now show that Theorem~\ref{thm:main} holds for $ d>1
$ as well. Assume Theorem~\ref{thm:main} is true for $ d=1 $ and take any $ p_1, \ldots, p_k \in
\F[x_1, \ldots, x_d] $ non-zero, pairwise linearly independent polynomials such
that their $ r $-th powers are linearly dependent for some $ r > \max\left\{k
\binom{k-1}{2},2 \right\} $, i.e.\ there exist $ \beta_1, \ldots,
\beta_k \in \F $ not all zero such that 
\begin{equation}
	\label{eq:lin_dep_power}
	\beta_1p_1^r + \ldots + \beta_kp_k^r = 0.
\end{equation}

Consider the sets $ S_i = \{p_j | \deg_{x_i}p_j \neq 0\} $, i.e.\ each $ S_i $
contains the polynomials that depend non-trivially on the $ i $-th variable. If
the number of elements in each set $ |S_i| = 1 $ for all $ i $, then
Equation~\eqref{eq:lin_dep_power} cannot hold since all polynomials are
univariate, with at most one of them being a constant (because of pairwise
linear independence). Hence, there is some $ i $ such that $ |S_i| = k'> 1 $.
Without loss of generality, assume $ i=1 $. Relabel the polynomials in $ S_1 $
to $ p_1, \ldots, p_{k'} $.  Using Lemma~\ref{lem:projection} on $ p_1, \ldots,
p_{k'} $, we can find $ (\alpha_2, \ldots, \alpha_d) $ such that $ \bar{p}_1,
\ldots, \bar{p}_{k'} $ are pairwise linearly independent. If $ k' = k $,
substituting $ (\alpha_2, \ldots, \alpha_d) $ in
Equation~\ref{eq:lin_dep_power}, we get a contradiction. If $ k > k' > 1$,
evaluating $ r $-th powers polynomials \emph{not} in $ S_1 $ at  $ (\alpha_2,
\ldots, \alpha_d) $ and summing the result leads to a constant $ \gamma'\in \F
$. If $ \gamma' = 0 $, we are done. Otherwise, take $ \gamma $ equal to any $ r $-th root
of $ \gamma' $. We have that $ \bar{p}_1, \ldots, \bar{p}_{k'}, \gamma $ are
non-zero, pairwise linearly independent polynomials.  Substitution of $
(\alpha_2, \ldots, \alpha_d) $ in Equation~\ref{eq:lin_dep_power} leads, again,
to a contradiction.

\begin{bibdiv}
	\begin{biblist}
		\bib{Kileel19}{article}{
			title={On the Expressive Power of Deep Polynomial Neural Networks},
			author={Kileel, Joe},
			author={Trager, Matthew},
			author={Bruna, Joan},
			url={https://proceedings.neurips.cc/paper_files/paper/2019/file/a0dc078ca0d99b5ebb465a9f1cad54ba-Paper.pdf},
			journal={Advances in Neural Information Processing Systems},
			editor={Wallach, Hannah},
			editor={Larochelle, Hugo},
			editor={Beygelzimer, Alina},
			editor={d'Alch\'{e}-Buc, Florence},
			editor={Fox, Emily},
			editor={Garnett, Roman},
			publisher={Curran Associates, Inc.},
			volume={32},
			date={2019}

		}

		\bib{Sam19}{article}{
			title={Linear independence of powers},
			author={Sam, Steven V.},
			author={Snowden, Andrew},
			url={https://arxiv.org/abs/1907.02659},
			year={2019},
			eprint={https://arxiv.org/abs/1907.02659}
		}

		\bib{Bisht17}{thesis}{
			title={On Hitting Sets for Special Depth-4 Circuits},
			author={Bisht, Pranav},
			organization={Indian Institute of Technology Kanpur},
			type={Master's Thesis},
			date={2017},
		}

		\bib{Kubjas24}{article}{
			title={Geometry of polynomial neural networks},
			author={Kubjas, Kaie},
			author={Li, Jiayi},
			author={Wiesmann, Maximilian},
			journal={Algebraic Statistics},
			volume={15},
			number={2},
			pages={295--328},
			year={2024},
			publisher={Mathematical Sciences Publishers}
		}

		\bib{Sheppard14}{thesis}{
			title={The ABC Conjecture and its Applications},
			author={Sheppard, Joseph},
			organization={Kansas State University},
			type={Master's Thesis},
			date={2014}
		}
	\end{biblist}
\end{bibdiv}

\end{document}